\newcommand{\1}{{1 \hspace{-0.35em} {\rm 1}}}
\newcommand{\tr}{{\rm tr}}
 \DeclareMathOperator{\Sp}{Sp}
\DeclareMathOperator{\End}{End} 
 \DeclareMathOperator{\im}{im}
 \DeclareMathOperator{\Id}{Id}
\newtheorem{thm}{Theorem}[subsection]
 \newtheorem{lem}[thm]{Lemma}
 \theoremstyle{definition}
 \newtheorem{defn}[thm]{Definition}
 \theoremstyle{remark}
 \newtheorem{rmk}[thm]{Remark}
\begin{document}
\title[$Invariants of links from the generalized Yang-Baxter equation$]
{Invariants of links from the generalized Yang-Baxter equation}

\author{Seung-moon Hong}
\email{seungmoon.hong@utoledo.edu}
\address{Department of Mathematics and Statistics\\
    The University of Toledo \\
    Toledo, OH 43606\\
    U.S.A.}

\thanks{MSC2010 numbers: Primary 57M25, 20F36; Secondary 81R05}

\begin{abstract}Enhanced Yang-Baxter operators give rise to invariants of oriented links. We expand the enhancing method to generalized Yang-Baxter operators. At present two examples of generalized Yang-Baxter operators are known and recently three types of variations for one of these were discovered. We present the definition of enhanced generalized YB-operators and show that all known examples of generalized YB-operators can be enhanced to give corresponding invariants of oriented links. Most of these invariants are specializations of the polynomial invariant $P$. Invariants from generalized YB-operators are multiplicative after a normalization.

\end{abstract}

\maketitle
\section{Introduction}

Solutions to the Yang-Baxter equation are called Yang-Baxter operators and give rise to representations of braid groups in a canonical way. It is well known that every oriented link can be obtained by closing some braid. Based on this relation between braids and links, V. G. Turaev introduced the enhanced Yang-Baxter operators (briefly, EYB-operators) and defined an isotopy invariant $T_S$ of oriented links from each EYB-operator $S$ in \cite{Tu}. In \cite{RZWG} generalized Yang-Baxter operator (briefly, gYB-operator) was proposed and a whole family of $(2,k,2^t)$-type gYB-operators were defined. We discuss a (2,3,2)-type gYB-operator as one of the main examples. Another example of gYB-operator, (2,3,1)-type, appeared in \cite{GHR} and three families of its variations were discussed in \cite{Ch}. In this paper we generalize the Turaev's enhancing method to obtain isotopy invariants of oriented links from the examples of gYB-operators. 

Note that from any ribbon category we have a link invariant for each object. If there is a generalized localization in the sense of \cite{GHR}, then some enhancement should exist and it is reasonable to expect that the corresponding invariant recovers the one defined directly from the category.

Here are the contents of this paper in more detail. In section \ref{gYB} we recall gYB-operator and consider the enhanced generalized Yang-Baxter operators (briefly, EgYB-operators). In section \ref{inv} we define an invariant of oriented links associated with each EgYB-operator in a similar way as in \cite{Tu}. In section \ref{ex} some examples of EgYB-operators and corresponding link invariants are studied.

\vspace{3mm}
\emph{Notation and convention.} In this paper $V$ denotes a finite dimensional vector space over complex number field $\mathbb{C}$. However all discussion is still valid for any finitely generated free module V over a commutative ring with 1. By a link, we mean an oriented link unless otherwise stated as we mainly consider invariants of oriented links in this paper. We denote the identity map of $V$ by $\Id_V$ and identity map of $V^{\otimes m}$ by simply $I_m$ when the vector space $V$ is clear from the context. Each basis $\{v_1,\ldots, v_d\}$ of a d-dimensional vector space $V$ gives us a basis $\{v_{i_1}\otimes \ldots \otimes v_{i_{n}}| i_1, \ldots, i_n \in \{1, 2, \ldots, d\}\}$ of $V^{\otimes n}$. On this basis, each endomorphism $f \in \End(V^{\otimes n})$ can be represented as a mulitiindexed matrix $\left( f^{j_1, \ldots, j_n}_{i_1, \ldots, i_n} \right)_{1 \leq i_1,j_1, \ldots, i_n,j_n \leq d}$ by

$$ f(v_{i_1}\otimes \ldots \otimes v_{i_n})=\sum_{1 \leq j_1, \ldots, j_n \leq d} f^{j_1, \ldots, j_n}_{i_1, \ldots, i_n} v_{j_1}\otimes \ldots \otimes v_{j_n}.$$

\vspace{3mm}\emph{Acknowledgements.} I am grateful to Eric Rowell for his encouragement and useful discussions.

\section{The generalized Yang-Baxter operators}\label{gYB}

\subsection{The enhanced Yang-Baxter operators}\label{EYB} In this subsection, we recall the EYB-operator introduced in \cite{Tu}.

\begin{defn}
An isomorphism $R:V^{\otimes 2} \rightarrow V^{\otimes 2}$ is called a \textbf{Yang-Baxter operator} (briefly, a YB-operator) if it satisfies the following Yang-Baxter equation:
$$(R\otimes I)\circ(I\otimes R)\circ(R\otimes I)=(I\otimes R)\circ(R\otimes I)\circ(I\otimes R):V^{\otimes 3}\rightarrow V^{\otimes 3}$$
\noindent where $I=\Id_V$.
\end{defn}

For each $f \in \End(V^{\otimes n})$ one can define its operator trace $\Sp_n(f)\in \End(V^{\otimes n-1})$. If $\{v_1,\ldots, v_d\}$ is a basis of $V$ then for any $i_1, \ldots, i_{n-1}\in \{1, 2, \ldots, d\}$

$$\Sp_n(f)(v_{i_1}\otimes \ldots \otimes v_{i_{n-1}})=\sum_{1 \leq j_1,\ldots,j_{n-1},j \leq d}f^{j_1,\ldots,j_{n-1},j}_{i_1,\ldots,i_{n-1},j} v_{j_1}\otimes \ldots \otimes v_{j_{n-1}}.$$

Note that $\Sp_n(f)$ does not depend on the choice of basis of $V$ and $\tr(\Sp_n(f))=\tr(f)$ where $\tr$ is the ordinary trace of endomorphisms.

\begin{defn}\label{def-EYB}
An \textbf{enhanced Yang-Baxter operator} (EYB-operator) is a collection \{a YB-operator $R:V^{\otimes 2} \rightarrow V^{\otimes 2}$, $\mu :V \rightarrow V $, invertible elements $\alpha, \beta$ of $\mathbb{C}$ \} which satisfies the following conditions:

(i) The endomorphism $\mu \otimes \mu : V^{\otimes 2} \rightarrow V^{\otimes 2}$ commutes with $R$;

(ii) $\Sp_2(R \circ (\mu \otimes \mu))=\alpha\beta\mu$; $\Sp_2(R^{-1} \circ (\mu \otimes \mu))=\alpha^{-1}\beta\mu.$

\end{defn}

\subsection{Extended operator trace and it's properties}

The above operator trace map $\Sp_n$ is obtained by fixing the last single tensor factor. We may define similar operator trace maps $\Sp_{k,m}:\End(V^{\otimes k})\rightarrow \End(V^{\otimes k-m}), m < k,$ by fixing the last $m$ tensor factors as follows:

 $$\Sp_{k,m}(f)(v_{i_1}\otimes \ldots \otimes v_{i_{k-m}})=\sum_{1 \leq j_1,\ldots,j_{k-m},l_1,\ldots,l_m \leq d}f^{j_1,\ldots,j_{k-m},l_{1}, \ldots, l_{m}}_{i_1,\ldots,i_{k-m},l_{1}, \ldots,l_{m}} v_{j_1}\otimes \ldots \otimes v_{j_{n-1}}.$$

This operator trace map $\Sp_{k,m}$ does not depend on the choice of basis of $V$ and $\tr(\Sp_{k,m}(f))=\tr(f)$ as well because it is simply a composition $\Sp_{k-m+1}\circ \ldots \circ \Sp_{k}$. In this notation, $\Sp_n$ is equal to $\Sp_{n,1}$.  We will use $\Sp_{3,2}$ in the subsection \ref{ex-2}. The following lemma is obtained directly from the definition.

\begin{lem}\label{sp}
For $m<k$, $f \in \End(V^{\otimes k})$ and $g \in \End(V^{\otimes k-m})$,

$$ \Sp_{k,m}(f\circ (g \otimes \Id_{V^{\otimes m}} ))=\Sp_{k,m}(f)\circ g;$$
$$ \Sp_{k,m}((g \otimes \Id_{V^{\otimes m}} )\circ f)=g\circ \Sp_{k,m}(f);$$
$$\Sp_{l+k,m}(\Id_{V^{\otimes l}} \otimes f)=\Id_{V^{\otimes l}} \otimes \Sp_{k,m}(f).$$

\end{lem}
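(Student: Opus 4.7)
The plan is to reduce everything to the single-index case $\Sp_k=\Sp_{k,1}$ (using the composition identity $\Sp_{k,m}=\Sp_{k-m+1}\circ\cdots\circ\Sp_k$ that was noted above the lemma) and then verify each identity by a direct matrix-entry computation. Induction on $m$ then lifts the $m=1$ identities to all $m<k$, since $g\otimes \Id_{V^{\otimes m}}=(g\otimes \Id_{V^{\otimes m-1}})\otimes \Id_V$ and $\Id_{V^{\otimes l}}\otimes f$ is unaffected by pulling one trace off of $f$ at a time. So the real content is the $m=1$ case of each identity.

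For identity (1), I would fix a basis $\{v_1,\dots,v_d\}$ and compute the matrix of $f\circ(g\otimes \Id_V)$ as
\[
(f\circ (g\otimes \Id_V))^{j_1,\dots,j_k}_{i_1,\dots,i_k}
=\sum_{s_1,\dots,s_{k-1}} f^{j_1,\dots,j_k}_{s_1,\dots,s_{k-1},i_k}\, g^{s_1,\dots,s_{k-1}}_{i_1,\dots,i_{k-1}}.
\]
Specialising $j_k=i_k$ and summing over $i_k$ to take $\Sp_k$, the $i_k$-sum can be pushed past $g$, leaving exactly $\sum_{s_1,\dots,s_{k-1}}\Sp_k(f)^{j_1,\dots,j_{k-1}}_{s_1,\dots,s_{k-1}}g^{s_1,\dots,s_{k-1}}_{i_1,\dots,i_{k-1}}$, which is the matrix of $\Sp_k(f)\circ g$. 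Identity (2) is the mirror image of (1): the composition matrix is $\sum_{s_1,\dots,s_{k-1}} g^{j_1,\dots,j_{k-1}}_{s_1,\dots,s_{k-1}} f^{s_1,\dots,s_{k-1},j_k}_{i_1,\dots,i_k}$, and since $g$ carries no last index, setting $j_k=i_k$ and summing passes through $g$ untouched.

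For identity (3), the matrix of $\Id_{V^{\otimes l}}\otimes f$ factorises as a product of Kronecker deltas on the first $l$ slots times the matrix of $f$ on the last $k$ slots; the partial trace over the last $m$ tensor factors acts only on the second factor of this product, so it produces $\delta$-symbols on the first $l$ slots times $\Sp_{k,m}(f)$, i.e.\ the matrix of $\Id_{V^{\otimes l}}\otimes \Sp_{k,m}(f)$.

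I do not anticipate any real obstacle: everything is bookkeeping on tensor indices, and the hardest part is choosing notation that does not drown the argument in subscripts. Writing $g\otimes \Id_{V^{\otimes m}}$ as an $m$-fold tensoring with $\Id_V$ and using the induction lets me present only the one-index version cleanly, which is what I would do in the final write-up.
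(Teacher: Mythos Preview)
Your proposal is correct and is essentially the approach the paper has in mind: the paper does not give a proof at all, merely remarking that the lemma ``is obtained directly from the definition,'' so your explicit index computation together with the induction via $\Sp_{k,m}=\Sp_{k-m+1}\circ\cdots\circ\Sp_k$ is exactly a fleshed-out version of that one-line claim.
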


Note that this lemma implies $\Sp_{l+k,m}(h\otimes f)=h\otimes \Sp_{k,m}(f)$ for $h \in \End(V^{\otimes l})$ and $f \in \End(V^{\otimes k})$.

\subsection{The enhanced generalized Yang-Baxter operators}\label{EgYB}

\begin{defn}
An isomorphism $R:V^{\otimes k} \rightarrow V^{\otimes k}$ is called a \textbf{generalized Yang-Baxter operator} (briefly, a gYB-operator) of type $(d,k,m)$ if it satisfies the following generalized Yang-Baxter equation and far-commutativity:

$$(R\otimes I_m)\circ(I_m \otimes R)\circ(R \otimes I_m)=(I_m \otimes R)\circ(R \otimes I_m)\circ(I_m \otimes R);$$
$$(R\otimes I^{\otimes (j-2)}_m)\circ(I^{\otimes (j-2)}_m\otimes R)=(I^{\otimes (j-2)}_m\otimes R)\circ(R\otimes I^{\otimes (j-2)}_m)\:\: \text{for}\:\: j \geq 4$$

\noindent where $\dim(V)=d$ and $I_m = \Id_{V^{\otimes m}}$.
\end{defn}

Note that a $(d,2,1)$ type gYB-operator is the ordinary YB-operator on $V$ of dimension $d$.

The (n-strand) braid group $B_n$ is defined as the group generated by $\sigma_1, \sigma_2, \ldots, \sigma_{n-1}$ satisfying:

$$ \sigma_i \sigma_j=\sigma_j\sigma_i \hspace{3mm} \text{for} \:\:|i-j| \geq 2 ;$$
$$ \sigma_i\sigma_{i+1}\sigma_i=\sigma_{i+1}\sigma_i\sigma_{i+1}\hspace{3mm} \text{for}\:\: i=1,2,\ldots, n-2.$$

Each gYB-operator gives rise to a representation of braid group $B_n \rightarrow \End(V^{\otimes k+m(n-2)})$ via $\sigma_i \mapsto R_i=I_m^{\otimes i-1}\otimes R \otimes I_m^{\otimes n-i-1}$. We denote this representation by $\rho^R_n$ and its image by $\im(\rho^R_n)$.

For any vector space one may consider a trace inner product $\langle f,g\rangle$ on $\End(V)$ defined by $\langle f,g\rangle=\tr(f^*\circ g)$ where $f^*$ is the hermitian conjugate of $f$. For any subset $A$ of $\End(V)$, we denote by $A^\bot$ the perpendicular subspace of A in $\End(V)$ with respect to this trace inner product. That is, for any $f \in A$ and $g \in A^\bot$, $\langle f,g\rangle=\tr(f^*\circ g)=0$.

\begin{defn}\label{def-EgYB}
Fix positive integers $k$ and $m$ such that $m<k$. An \textbf{enhanced generalized Yang-Baxter operator} (EgYB-operator) is a collection \{a gYB-operator $R:V^{\otimes k} \rightarrow V^{\otimes k}$, $\mu :V \rightarrow V $, invertible elements $\alpha, \beta$ of $\mathbb{C}$ \} which satisfies the following conditions for all $n$:

(i) The endomorphism $\mu^{\otimes k}  : V^{\otimes k} \rightarrow V^{\otimes k}$ commutes with $R$;

(ii) $\mu^{\otimes m(n-1)} \otimes (\Sp_{k,m}(R\circ \mu^{\otimes k})-\alpha\beta\mu^{\otimes k-m}) \in \im(\rho^R_n)^\bot$;\\ $\mu^{\otimes m(n-1)} \otimes (\Sp_{k,m}(R^{-1}\circ \mu^{\otimes k})-\alpha^{-1}\beta\mu^{\otimes k-m}) \in \im(\rho^R_n)^\bot$.

\end{defn}

\begin{rmk} The condition (ii) above is strictly weaker than the condition (ii) in the Definition \ref{def-EYB}. Indeed, the EYB-operator is the case that $k=2$, $m=1$, and $\mu^{\otimes m(n-1)} \otimes (\Sp_{k,m}(R^{\pm 1}\circ \mu^{\otimes k})-\alpha^{\pm 1}\beta\mu^{\otimes k-m})$ is the trivial endomorphism 0 which obviously belongs to $\im(\rho^R_n)^\bot$. We will see some nontrivial cases in the subsection \ref{ex-1}.
\end{rmk}

\section{Invariants of braids and links}\label{inv}

It is well known that any oriented link can be obtained by closing a braid, that is by connecting top endpoints with bottom endpoints by disjoint arcs (Alexander's theorem), and two braids produce isotopic links in this way if and only if these braids are related by a finite sequence of Markov moves $\xi \mapsto \eta^{-1}\xi\eta, \xi \mapsto \xi\sigma_n^{\pm1}$ where $\xi, \eta \in B_n$ (Markov's theorem). (see Chapter 2 in \cite{Bi})

\subsection{Invariants of braids}\label{inv-b}

We call the braid generators $\sigma_i$ positive crossings and their inverses negative crossings. For each braid $\xi \in B_n$, we denote the number of positive crossings by $w_+(\xi)$ and number of negative crossings by $w_-(\xi)$. Then $w(\xi)=w_+(\xi)-w_-(\xi)$ is a homomorphism from $B_n$ to the additive group of integers.

Each EgYB operator $S=(R,\mu,\alpha,\beta)$ determines a map $T_S: \bigsqcup_{n \geq 1}B_n\rightarrow \mathbb{C}$ as follows. For each braid $\xi \in B_n$ let

$$T_S(\xi)=\alpha^{-w(\xi)}\beta^{-n} \tr(\rho^R_n(\xi)\circ \mu^{\otimes k+m(n-2)}).$$

\noindent The following theorem provides key properties of $T_S$.

\begin{thm}\label{key} For any $\xi, \eta \in B_n$

$$T_S(\eta^{-1}\xi\eta)=T_S(\xi \sigma_n)= T_S(\xi \sigma_n^{-1})=T_S(\xi).$$

\end{thm}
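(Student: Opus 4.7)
The plan is to handle conjugation invariance and the two Markov stabilization moves in turn, using condition (i) once for the former and conditions (i)--(ii) together with Lemma \ref{sp} for the latter.

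For conjugation, since $w\colon B_n\to\Z$ is a homomorphism we have $w(\eta^{-1}\xi\eta)=w(\xi)$, and both braids lie in $B_n$, so the scalar prefactor $\alpha^{-w}\beta^{-n}$ is unchanged. It therefore suffices to show that $\mu^{\otimes k+m(n-2)}$ commutes with $\rho^R_n(\eta)$, after which the cyclic property of the trace finishes the argument. Each generator $R_i=I_m^{\otimes i-1}\otimes R\otimes I_m^{\otimes n-i-1}$ commutes with the matching decomposition $\mu^{\otimes k+m(n-2)}=\mu^{\otimes m(i-1)}\otimes\mu^{\otimes k}\otimes\mu^{\otimes m(n-i-1)}$ by condition (i) of the EgYB-operator, hence so does $\rho^R_n(\eta)$.

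For the two stabilization moves, set $N=k+m(n-1)$. Under the natural inclusion $B_n\hookrightarrow B_{n+1}$ one has $\rho^R_{n+1}(\xi)=\rho^R_n(\xi)\otimes I_m$ and $\rho^R_{n+1}(\sigma_n)=I_{m(n-1)}\otimes R$; combining the last two tensor factors across position $m(n-1)$ rewrites
\[\rho^R_{n+1}(\xi\sigma_n^{\pm 1})\,\mu^{\otimes N}=(\rho^R_n(\xi)\otimes\Id_{V^{\otimes m}})\circ(\mu^{\otimes m(n-1)}\otimes(R^{\pm 1}\circ\mu^{\otimes k})).\]
I now apply $\tr=\tr\circ\Sp_{N,m}$: the second identity of Lemma \ref{sp} pulls $\rho^R_n(\xi)$ outside $\Sp_{N,m}$, and the remark after the lemma collapses the remaining tensor product, giving
\[\tr(\rho^R_{n+1}(\xi\sigma_n^{\pm 1})\mu^{\otimes N})=\tr\bigl(\rho^R_n(\xi)\circ[\mu^{\otimes m(n-1)}\otimes\Sp_{k,m}(R^{\pm 1}\circ\mu^{\otimes k})]\bigr).\]
Condition (ii) decomposes the bracketed operator as $\alpha^{\pm 1}\beta\,\mu^{\otimes k+m(n-2)}+h^\pm$ with $h^\pm\in\im(\rho^R_n)^\bot$; since $\rho^R_n(\xi)\in\im(\rho^R_n)$, the perpendicularity forces $\tr(\rho^R_n(\xi)\circ h^\pm)=0$, leaving $\alpha^{\pm 1}\beta\,\tr(\rho^R_n(\xi)\mu^{\otimes k+m(n-2)})$. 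Combining with $w(\xi\sigma_n^{\pm 1})=w(\xi)\pm 1$ and the shift $n\mapsto n+1$ in the prefactor then gives exactly $T_S(\xi)$.

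The main obstacle is the indexing bookkeeping inside $\Sp_{N,m}$: the composite must be arranged to fit the precise form required by Lemma \ref{sp}, and the resulting expression must line up exactly with the operator that condition (ii) controls. Once this alignment is achieved, the orthogonality step is immediate, and the scalars introduced by the $\Sp_{N,m}$-computation absorb precisely the shift in weight and strand count in the definition of $T_S$.
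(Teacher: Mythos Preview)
Your argument is correct and follows the same route as the paper's proof: condition (i) plus trace cyclicity for conjugation invariance, and Lemma \ref{sp} together with condition (ii) for the Markov stabilization. The only differences are cosmetic---your $N$ equals the paper's $N+m$, and you group the $\mu$-factors with $R$ before applying $\Sp_{N,m}$ whereas the paper separates them slightly differently---but the resulting trace identity and the appeal to orthogonality are identical.
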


\begin{proof}
 Let $N=k+m(n-2)$. From the definition of EgYB-operator, $\mu^{\otimes N}$ commutes with $\rho^R_n(\eta)$. Thus

$$\tr(\rho^R_n(\eta^{-1}\xi\eta)\circ \mu^{\otimes N} )=\tr(\rho^R_n(\eta^{-1})\circ \rho^R_n(\xi)\circ \mu^{\otimes N} \circ \rho^R_n(\eta) )=\tr(\rho^R_n(\xi)\circ \mu^{\otimes N}).$$

\noindent Since $w(\eta^{-1}\xi\eta)=w(\xi)$, we have $T_S(\eta^{-1}\xi\eta)=T_S(\xi)$.

Now we prove $T_S(\xi \sigma_n)=T_S(\xi)$. Note that for $\xi \in B_n$

$$\rho^R_{n+1}(\xi \sigma_n)=(\rho^R_n(\xi)\otimes I_m)\circ R_n, \:\:\: \text{and}$$

$$\mu^{\otimes N+m}=(I_{N+m-k} \otimes \mu^{\otimes k})\circ (\mu^{\otimes N+m-k}\otimes I_k)$$

\noindent

Thus,

\vspace{2mm}$\tr \left(\rho^R_{n+1}(\xi \sigma_n)\circ \mu^{\otimes N+m} \right)\\
=\tr \left( (\rho^R_n(\xi)\otimes I_m)\circ R_n \circ (I_{N+m-k} \otimes \mu^{\otimes k})\circ (\mu^{\otimes N+m-k}\otimes I_k) \right) \\
=\tr \left( \Sp_{N+m,m} \left[ (\rho^R_n(\xi)\otimes I_m)\circ R_n \circ (I_{N+m-k} \otimes \mu^{\otimes k})\circ (\mu^{\otimes N+m-k}\otimes I_k) \right] \right) \\
= \tr \left( \rho^R_n(\xi) \circ \left[ I_{N+m-k} \otimes \Sp_{k,m}(R\circ \mu^{\otimes k})  \right] \circ (\mu^{\otimes N+m-k}\otimes I_{k-m})  \right) $

\vspace{2mm}\noindent where the third equality comes from the Lemma \ref{sp}. By the definition of the EgYB-operator,

\vspace{2mm}$\tr \left(\rho^R_{n+1}(\xi \sigma_n)\circ \mu^{\otimes N+m} \right) - \alpha\beta \tr \left(  \rho^R_n(\xi) \circ \mu^{\otimes N} \right) \\= \tr \left( \rho^R_n(\xi) \circ \left[ I_{N+m-k} \otimes (\Sp_{k,m}(R\circ \mu^{\otimes k})- \alpha\beta\mu^{\otimes k-m}     )  \right]     \circ (\mu^{\otimes N+m-k}\otimes I_{k-m}) \right) \\
= \tr \left( \rho^R_n(\xi) \circ \left[ \mu^{\otimes N+m-k} \otimes (\Sp_{k,m}(R\circ \mu^{\otimes k})- \alpha\beta\mu^{\otimes k-m}     )  \right]      \right)=0$

\vspace{2mm}Clearly, $w(\xi \sigma_n)=w(\xi)+1$ and thus $T_S(\xi \sigma_n)=T_S(\xi)$. The other equality $T_S(\xi \sigma_n^{-1})=T_S(\xi)$ is obtained similarly.

\end{proof}

\subsection{Invariants of links}\label{inv-l}

Using the same notation $T_S$, we define a map from the set of isotopy classes of oriented links to $\mathbb{C}$ by $T_S(L)=T_S(\xi)$ where a link $L$ is isotopic to the closure of a braid $\xi$.  Markov's theorem and Theorem \ref{key} show that this map on oriented links is well defined and isotopy invariant.


We denote the trivial knot by $G_1$ and the trivial $n$-component link by $G_n$. For a $(d,k,m)$ EgYB-operator $S$, we have

\begin{equation}\label{gn} T_S(G_1)=\beta^{-1}\tr(\mu)^{k-m}\:\: \text{and}\:\:\: T_S(G_n)=\beta^{-n}\tr(\mu)^{k+m(n-2)}.
\end{equation}

  We say that an invariant of links, say $\mathcal{I}$, is multiplicative if $\mathcal{I}(L)=\mathcal{I}(L_1)\cdot \mathcal{I}(L_2)$ for disjoint union $L=L_1 \bigsqcup L_2$ of two links $L_1$ and $L_2$. From the formula (\ref{gn}), it is easy to see that the link invariant $T_S$ obtained from EgYB-operator is not necessarily multiplicative while the invariant from EYB-operator is so (see \cite{Tu}). This is essentially because $\rho^R_n(\sigma_i)$ and $\rho^R_n(\sigma_{i+2})$ possibly act nontrivially on the same tensor factor in the middle and as a result the endomorphism corresponding to a link $L=L_1 \bigsqcup L_2$ is not necessarily expressed as a tensor product of two operators in $\End(V^{\otimes k+m(n-2)})$. More specifically, this is the case when $m < \frac{1}{2}k$. However, link invariant $T_S$ obtained from any EgYB-operator $S$ is projectively multiplicative, and hence multiplicative after a normalization.

 \begin{thm}\label{mul}   If $S$ is a $(d,k,m)$ EgYB-operator for any $m$ and $k$, then the invariant $T_S$ is projectively multiplicative:

  $$T_S(L_1\bigsqcup L_2)=\tr(\mu)^{2m-k} T_S(L_1)\cdot T_S(L_2)$$

 \end{thm}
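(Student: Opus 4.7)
Proof plan. The plan is to pick braid representatives $\xi_i \in B_{n_i}$ of $L_i$ and exploit the fact that $L_1 \sqcup L_2$ is the closure of $\xi := \xi_1 \oplus \xi_2 \in B_{n_1+n_2}$ (placing the shifted $\xi_2$ to the right of $\xi_1$ via $\sigma_j \mapsto \sigma_{n_1+j}$), with $w(\xi) = w(\xi_1) + w(\xi_2)$. Unpacking the definitions reduces the theorem to the identity
\[
\tr\bigl(\rho^R_{n_1+n_2}(\xi) \circ \mu^{\otimes N}\bigr) = \tr(\mu)^{2m-k}\,\tr\bigl(\rho^R_{n_1}(\xi_1)\circ \mu^{\otimes p_1}\bigr)\,\tr\bigl(\rho^R_{n_2}(\xi_2) \circ \mu^{\otimes p_2}\bigr),
\]
where $N = k + m(n_1+n_2-2)$ and $p_i = k+m(n_i-2)$.

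The starting point is the structural factorization
\[
\rho^R_{n_1+n_2}(\xi_1 \oplus \xi_2) = (X \otimes I_m^{\otimes n_2}) \circ (I_m^{\otimes n_1} \otimes Y),
\]
where $X := \rho^R_{n_1}(\xi_1)$ and $Y := \rho^R_{n_2}(\xi_2)$; this is immediate from the disjointness of the generators used by $\xi_1$ and shifted $\xi_2$ inside $B_{n_1+n_2}$. The arithmetic identity $p_1 + p_2 = N + (k-2m)$ governs the geometry: the support of $X$ occupies the first $p_1$ tensor factors and the support of $Y$ the last $p_2$, so they are either separated by a gap of $2m-k$ factors (if $k \leq 2m$) or overlap on $k-2m$ factors (if $k > 2m$).

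When $k \leq 2m$, the product above literally equals $X \otimes I_V^{\otimes 2m-k} \otimes Y$ as a single tensor product on $V^{\otimes N}$. Splitting $\mu^{\otimes N} = \mu^{\otimes p_1} \otimes \mu^{\otimes 2m-k} \otimes \mu^{\otimes p_2}$, and using condition (i) of the EgYB-operator to commute $\mu^{\otimes p_i}$ past $\rho^R_{n_i}(\xi_i)$, the trace factors as $\tr(X\mu^{\otimes p_1}) \cdot \tr(\mu)^{2m-k} \cdot \tr(Y \mu^{\otimes p_2})$, which yields the claim after absorbing the $\alpha$ and $\beta$ prefactors.

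The main obstacle, explicitly anticipated in the paragraph preceding the theorem, is the overlap case $k > 2m$: here $X$ and $Y$ act jointly on $k-2m$ shared tensor positions, so no single tensor-product factorization is available. To handle it I would decompose $V^{\otimes N} = V^{\otimes n_1 m} \otimes V^{\otimes k-2m} \otimes V^{\otimes n_2 m}$, write $X = \sum_i X^L_i \otimes X^R_i$ and $Y = \sum_j Y^L_j \otimes Y^R_j$ relative to the corresponding splittings of $V^{\otimes p_i}$, and rewrite the trace as a weighted sum over partial traces on the overlap region. The precise cancellation producing the factor $\tr(\mu)^{2m-k}$ should then follow from the structure of the braid representation, using the far-commutativity of $R$ (which forces the shared $R$-blocks to commute) together with a partial-trace manipulation in the spirit of Lemma \ref{sp}; this is the technical heart of the proof.
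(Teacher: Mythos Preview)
Your treatment of the case $k\le 2m$ is correct and coincides with the paper's argument specialized to $l=0$. The gap is in the overlap case $k>2m$: your sketch there is not a proof, and the mechanism you invoke will not produce the identity. Far-commutativity only says that $R_i$ and $R_j$ commute when $|i-j|\ge 2$; it tells you nothing about the $k-2m$ tensor positions where $X$ and $Y$ genuinely act simultaneously. For generic endomorphisms $X\in\End(V^{\otimes p_1})$ and $Y\in\End(V^{\otimes p_2})$ the quantity $\tr\bigl((X\otimes I)(I\otimes Y)\mu^{\otimes N}\bigr)$ does \emph{not} factor as $\tr(\mu)^{2m-k}\tr(X\mu^{\otimes p_1})\tr(Y\mu^{\otimes p_2})$, so some special feature of braid images is needed, and a tensor decomposition $X=\sum X^L_i\otimes X^R_i$, $Y=\sum Y^L_j\otimes Y^R_j$ together with Lemma~\ref{sp} only rewrites the trace as $\sum_{i,j}\tr(X^L_i\mu^{\otimes mn_1})\tr(X^R_iY^L_j\mu^{\otimes k-2m})\tr(Y^R_j\mu^{\otimes mn_2})$ without any evident cancellation.

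The paper sidesteps this computation entirely by a soft argument that uses the already established Theorem~\ref{key}. One chooses $l\ge 0$ with $ml+2m-k\ge 0$ and considers $L'=L\sqcup G_l$. The link $L'$ is the closure of both $\xi^{(1)}=\xi_1*\mid_l*\xi_2$ (trivial strands inserted \emph{between} the two pieces) and $\xi^{(2)}=\xi_1*\xi_2*\mid_l$ (trivial strands appended at the end). In $\xi^{(1)}$ the supports of $\rho^R(\xi_1)$ and $\rho^R(\xi_2)$ are now separated by $ml+2m-k\ge 0$ tensor factors, so your own $k\le 2m$ argument applies and gives $\tr(\mu)^{ml+2m-k}\tr(\rho^R_{n_1}(\xi_1)\mu^{\otimes p_1})\tr(\rho^R_{n_2}(\xi_2)\mu^{\otimes p_2})$. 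In $\xi^{(2)}$ one reads off $\tr(\mu)^{ml}\tr(\rho^R_{n}(\xi)\mu^{\otimes N})$ directly. Since $T_S$ is a link invariant by Theorem~\ref{key}, these two traces (multiplied by the same $\alpha,\beta$ prefactor) must agree, and cancelling $\tr(\mu)^{ml}$ yields exactly the claimed relation. The missing idea in your proposal is this stabilization-by-trivial-strands trick combined with invoking the Markov invariance you have already proved.
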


 \noindent Proof is given in the Appendix. Note that in the case of EYB-operators, $m=1$ and $k=2$ and thus the corresponding invariant is multiplicative without the factor in equation (\ref{mul}).

 \begin{rmk} It is easy to see that

 $$ \tilde{T}_{S}(L)=\tr(\mu)^{2m-k}T_S(L) $$

 is multiplicative. Indeed, $\tilde{T}_{S}(L_1\bigsqcup L_2)=\tilde{T}_S(L_1)\cdot \tilde{T}_S(L_2)$.
 \end{rmk}

For a link diagram $L_0$, we may consider two link diagrams $L_+$ and $L_-$ which are obtained by a local deformation introducing a positive and a negative crossing, respectively (see Figure\ref{Conway}).

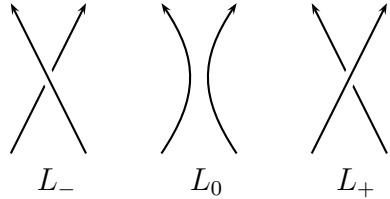
\begin{figure}[h]\psset{unit=5mm}

$\begin{pspicture}(0,1)(2,5)
\psline{<-}(2,5)(0,1)
\psline[linecolor=white,linewidth=5pt](0.5,4)(1.5,2)
\psline{<-}(0,5)(2,1)
\end{pspicture}
\hspace{10mm}
\begin{pspicture}(0,1)(2,5)
\psbezier{->}(0,1)(1,2.5)(1,3.5)(0,5)
\psbezier{->}(2,1)(1,2.5)(1,3.5)(2,5)
\end{pspicture}
\hspace{10mm}
\begin{pspicture}(0,1)(2,5)
\psline{<-}(0,5)(2,1)
\psline[linecolor=white,linewidth=5pt](1.5,4)(0.5,2)
\psline{<-}(2,5)(0,1)
\end{pspicture}$

$\hspace{2mm}L_{-}\hspace{15mm}L_0 \hspace{15mm}L_{+}$

\caption{\small Link diagrams $L_+$ and $L_-$ are obtained from $L_0$ by introducing a positive and a negative crossing, respectively, in some disk. Outsides the disk are identical.  }
\label{Conway}
\end{figure}

 A Conway-type relation between the invariants of links $L_-$,$L_0$, and $L_+$ is particularly interesting and the following theorem is reproved in \cite{Tu}.

\begin{thm}\label{unique}
There exists a unique mapping $P$ from the set of isotopy types of oriented links into the ring $\mathbb{Z}[x,x^{-1},y,y^{-1}]$ such that $P(G_1)=1$ and for any triple $(L_-,L_0,L_+)$

$$xP(L_+)+x^{-1}P(L_-)=yP(L_0).$$
\end{thm}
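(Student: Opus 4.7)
The plan is to establish uniqueness by a descending-diagram induction, and to establish existence by constructing $P$ via the Ocneanu trace on the tower of Hecke algebras (Turaev's approach).

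For uniqueness, I would first pin down the value of $P$ on unlinks. Introducing a Reidemeister-I kink on one component of a diagram of $G_n$, both $L_+$ and $L_-$ at that kink represent $G_n$, while the oriented smoothing $L_0$ represents $G_{n+1}$. The skein relation combined with $P(G_1)=1$ then inductively yields $P(G_n)=\bigl((x+x^{-1})/y\bigr)^{n-1}$. Given an arbitrary diagram $D$ with $c(D)$ crossings, let $u(D)$ be the minimum number of crossing switches needed to convert $D$ into a descending diagram relative to some ordering of components and basepoints. A descending diagram represents an unlink, so $P(D)$ is determined whenever $u(D)=0$. For $u(D)>0$, select a crossing that must be switched; the skein relation rewrites $P$ of the current diagram in terms of $P(L_0)$, which has $c(D)-1$ crossings, and $P$ of the diagram with that crossing reversed, which has the same crossing count but strictly smaller $u$. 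A lexicographic induction on $(c(D),u(D))$ then determines $P$ uniquely.

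For existence, I would build $P$ from the Ocneanu trace $\tau$ on the tower of Hecke algebras $H_n(q)$. Choosing parameters so that the Hecke quadratic relation matches the Conway form of the desired skein, and combining Alexander's theorem (every link is a braid closure) with Markov's theorem, one defines $P(\hat\xi)$ from $\tau(\xi)$ after a writhe normalization. The two Markov-type properties of $\tau$—invariance under conjugation and the stabilization relation $\tau(a\sigma_n^{\pm 1})=z_\pm\tau(a)$ for $a\in H_n$—are exactly what is needed for well-definedness on isotopy classes, while the skein relation follows at a single crossing from the quadratic Hecke relation.

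The main obstacle is the existence half. The uniqueness recipe does not automatically give a well-defined invariant, because one must show the output is independent of the order in which skein moves are applied, equivalently, invariant under all three Reidemeister moves. Pushing this difficulty through the Hecke algebra converts it into verifying the two trace properties of $\tau$, which is nontrivial but clean; attempting instead to prove Reidemeister invariance directly from the inductive definition leads to a long case analysis. Either way, the real content lies in showing that the polynomial exists as a genuine isotopy invariant, not merely that it is determined should it exist.
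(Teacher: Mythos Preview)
The paper does not actually prove this theorem: it is quoted as background, with the remark that it ``is reproved in \cite{Tu}'' and no further argument. So there is nothing in the paper to compare your proposal against line by line.

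That said, your sketch is a sound outline of a standard proof. The uniqueness half via the lexicographic induction on $(c(D),u(D))$ and descending diagrams is exactly the usual skein--tree argument, and your computation of $P(G_n)=\bigl((x+x^{-1})/y\bigr)^{n-1}$ is correct. One small quibble: with the paper's convention (Figure~\ref{Conway}) the local model for $L_0$ has two \emph{parallel} strands, so the cleanest way to get the unlink recursion is to take $L_0=G_{n+1}$ with two components placed side by side, so that $L_\pm$ both merge them into $G_n$; a Reidemeister~I kink has antiparallel strands at the crossing and does not literally fit that picture.

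For existence, the Hecke--algebra/Ocneanu--trace route you describe is valid, but calling it ``Turaev's approach'' is a misattribution. What Turaev does in \cite{Tu}---and what motivates the present paper---is to produce $P$ from explicit enhanced Yang--Baxter operators $S=(R,\mu,\alpha,\beta)$: he writes down an $R$-matrix on $(\mathbb{C}^d)^{\otimes 2}$ satisfying a quadratic relation that translates into the skein relation for the associated invariant $T_S$, and then normalizes. The Hecke/Ocneanu construction and Turaev's $R$-matrix construction are of course closely related (both pass through the braid group and a Markov trace), but they are organized differently, and the latter is the one actually invoked here.
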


\section{Examples}\label{ex}

At present not many gYB-operators are known. A(2,3,2) gYB-operator is given in \cite{RZWG} and a (2,3,1) gYB-operator is in \cite{GHR}. Three families of variations of the latter are obtained in \cite{Ch} and we will call those as (2,3,1) gYB-operator of type I, II, and III in the following subsection \ref{ex-1}. All these gYB-operators can be enhanced and give rise to invariants of oriented links.

\subsection{(2,3,1)EgYB-operators}\label{ex-1}

A vector space $V$ is 2-dimensional. On the lexicographically ordered basis of $V^{\otimes 3}$, three families of (2,3,1) gYB-operators are represented as $8 \times 8$ unitary matrices for $0 \leq \theta \leq \pi$ as follows \cite{Ch}:

\vspace{2mm}
Type I: $R(\theta)=\frac{1}{\sqrt{2}}\begin{pmatrix}1&0&1&0\\ 0&i&0&e^{i\theta}\\-i&0&i&0\\0&-ie^{-i\theta}&0&1      \end{pmatrix} \oplus \frac{1}{\sqrt{2}}\begin{pmatrix}i&0&e^{i\theta}&0\\ 0&1&0&-e^{2i\theta}\\-ie^{-i\theta}&0&1&0\\0&ie^{-2i\theta}&0&i      \end{pmatrix}$

\vspace{2mm}
Type II: $R(\theta)=\frac{1}{\sqrt{2}}\begin{pmatrix}1&0&1&0\\ 0&i&0&e^{i\theta}\\-1&0&1&0\\0&e^{-i\theta}&0&i      \end{pmatrix} \oplus \frac{1}{\sqrt{2}}\begin{pmatrix}i&0&e^{i\theta}&0\\ 0&1&0&-e^{2i\theta}\\e^{-i\theta}&0&i&0\\0&e^{-2i\theta}&0&1      \end{pmatrix}$

\vspace{2mm}
Type III: $R(\theta)=\frac{1}{\sqrt{2}}\begin{pmatrix}1&0&1&0\\ 0&1&0&e^{i\theta}\\-1&0&1&0\\0&-e^{-i\theta}&0&1      \end{pmatrix} \oplus \frac{1}{\sqrt{2}}\begin{pmatrix}1&0&-e^{i\theta}&0\\ 0&1&0&-e^{2i\theta}\\e^{-i\theta}&0&1&0\\0&e^{-2i\theta}&0&1      \end{pmatrix}$

\begin{rmk}\label{rmk}
\begin{enumerate}
\item Note that for all of three families $R^{j_1,j_2,j_3}_{i_1,i_2,i_3}=0$ if $i_1\neq j_1$ or $i_3\neq j_3$ where $i_1,i_2,i_3,j_1,j_2,j_3 \in \{1,2\}$. That is, $R$ acts on the first and third tensor factors diagonally, and so does $R^{-1}$. Which means in particular that $\left(\rho^R_n(\xi)\right)^{j_1,\ldots,j_{N-1},j_{N}}_{i_1,\ldots,i_{N-1},i_{N}}=0$ if $i_N\neq j_N$ for any $\xi \in B_n$ where $N=k+m(n-2)=n+1$.
\item For $f,g \in \End(V^{\otimes N})$, it is well known that

$$\tr(f\circ g)= \sum_{1 \leq i_1,j_1,\ldots, i_N,j_N \leq \dim(V)}f^{i_1,\ldots,i_{N-1},i_{N}}_{j_1,\ldots,j_{N-1},j_{N}}g^{j_1,\ldots,j_{N-1},j_{N}}_{i_1,\ldots,i_{N-1},i_{N}}.$$

\item Suppose that $g \in \End(V^{\otimes N})$ acts on the last tensor factor off-diagonally. Explicitly, that is,
$g^{j_1,\ldots,j_{N-1},j_{N}}_{i_1,\ldots,i_{N-1},i_{N}}=0$ if $i_N = j_N$. Then by combining two facts above, one obtains the following:

$$\tr\left(\rho^R_n(\xi) \circ g \right)=0 \:\: \text{for any}\:\: \xi \in B_n $$

\noindent and which means that $g \in \im(\rho^R_n)^\bot.$

\end{enumerate}
\end{rmk}

\vspace{3mm}\noindent \emph{\underline{(2,3,1) EgYB-operator of type I}}

\begin{thm}\label{thm-I}
Let $R$ be a (2,3,1) gYB-operator of type I. Let $\mu=\Id_{V}$, $\alpha=e^{i\pi/4}$, and $\beta=1$. Then $S=(R,\mu,\alpha,\beta)$ is an EgYB-operator such that

$$T_S(L_{+})+T_S(L_{-})=T_S(L_0)$$

and $T_S(G_n)=2^{n+1}$.
\end{thm}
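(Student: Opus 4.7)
The proof splits into four pieces: conditions (i) and (ii) of the EgYB definition, the Conway-type skein identity, and the formula for $T_S(G_n)$. Condition (i) holds trivially since $\mu=\Id_V$, and $T_S(G_n)=\beta^{-n}\tr(\mu)^{k+m(n-2)}=2^{n+1}$ is an immediate substitution into (\ref{gn}). Both remaining items will be deduced from the single operator-level quadratic relation
$$R^2 - e^{i\pi/4}R + e^{i\pi/2}I_3 = 0 \quad\text{on}\quad V^{\otimes 3},$$
equivalently $\alpha^{-1}R + \alpha R^{-1} = I_3$.

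To establish this identity, I use Remark \ref{rmk}(1): $R$ acts diagonally on the first and third tensor factors, so each of the two $4\times 4$ blocks displayed in the statement further decomposes into two $2\times 2$ sub-blocks indexed by the value of the third tensor index. Inspecting the four resulting sub-blocks shows each has trace $(1+i)/\sqrt 2 = e^{i\pi/4}$ and determinant $i = e^{i\pi/2}$ (the $\theta$-dependent off-diagonal entries contribute a product $e^{i\theta}\cdot e^{-i\theta}$ that cancels), so by Cayley--Hamilton each satisfies $\lambda^2 - e^{i\pi/4}\lambda + e^{i\pi/2}=0$, whence $R$ does as well.

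For condition (ii), I apply Remark \ref{rmk}(3). Since $\mu=\Id_V$, it suffices to show $\Sp_{3,1}(R) - e^{i\pi/4}\Id_{V^{\otimes 2}}$ acts off-diagonally on its second tensor factor, i.e.\ $[\Sp_{3,1}(R)]^{j_1\, i_2}_{i_1\, i_2} = \delta^{j_1}_{i_1}e^{i\pi/4}$ for all $i_1, i_2, j_1$. Vanishing for $j_1 \neq i_1$ is automatic from Remark \ref{rmk}(1), and the diagonal values are the four two-term sums $\sum_j R^{i_1 i_2 j}_{i_1 i_2 j}$, each equal to $e^{i\pi/4}$ by direct inspection. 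Condition (ii) for $R^{-1}$ then requires no new computation: applying the linear map $\Sp_{3,1}$ to the equivalent relation $R+e^{i\pi/2}R^{-1}=e^{i\pi/4}I_3$ and using $\Sp_{3,1}(I_3)=2\Id_{V^{\otimes 2}}$ yields
$$\Sp_{3,1}(R^{-1}) - e^{-i\pi/4}\Id_{V^{\otimes 2}} = -e^{-i\pi/2}\bigl(\Sp_{3,1}(R)-e^{i\pi/4}\Id_{V^{\otimes 2}}\bigr),$$
a scalar multiple of the $R$ case.

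For the skein, I write $L_+, L_-, L_0$ as closures of $\eta_1\sigma_i\eta_2$, $\eta_1\sigma_i^{-1}\eta_2$, $\eta_1\eta_2$ in $B_n$, with writhes $w_0+1, w_0-1, w_0$ where $w_0=w(\eta_1)+w(\eta_2)$. The localized identity $\alpha^{-1}R_i+\alpha R_i^{-1}=\Id$, composed with $\rho^R_n(\eta_1)$ on the left and $\rho^R_n(\eta_2)$ on the right and then traced against $\mu^{\otimes N}$, yields after scaling by $\alpha^{-w_0}\beta^{-n}$ exactly $T_S(L_+)+T_S(L_-)=T_S(L_0)$. The main obstacle is the characteristic-polynomial check for the four $2\times 2$ sub-blocks: it is only bookkeeping, but each sub-block must be handled carefully to see the $\theta$-dependence cancel.
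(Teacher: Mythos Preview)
Your proof is correct and follows the same strategy as the paper: condition (i) and the value on $G_n$ are immediate, condition (ii) is reduced via Remark~\ref{rmk}(3) to the off-diagonal form of $\Sp_{3,1}(R^{\pm1})-\alpha^{\pm1}\Id_{V^{\otimes 2}}$ on the second factor, and the skein relation follows from $\alpha^{-1}R+\alpha R^{-1}=\Id_{V^{\otimes 3}}$. The only organizational difference is that you first prove the quadratic relation via Cayley--Hamilton on the four $2\times 2$ sub-blocks and then reuse it to handle the $R^{-1}$ half of condition (ii), whereas the paper computes $\Sp_{3,1}(R)-\alpha\beta\Id$ directly as an explicit $4\times4$ matrix and dispatches the $R^{-1}$ case by complex conjugation (unitarity of $R$); both routes amount to the same elementary check.
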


\begin{proof}
The commutativity condition (i) in Definition \ref{def-EgYB} is trivial because $\mu$ is the identity.
For the condition (ii), we need to show

$\tr \left[ \rho^R_n(\xi) \circ \left( \Id_V^{\otimes m(n-1)}\otimes (\Sp_{3,1}(R^{\pm 1})-\alpha^{\pm 1}\Id_V^{\otimes 2}) \right) \right]=0$

\vspace{3mm}\noindent for any $\xi \in B_n$. From the Remark \ref{rmk}, it suffices to show that $\Sp_{3,1}(R^{\pm 1})-\alpha^{\pm 1}\Id_V^{\otimes 2}$ acts on the last tensor factor off-diagonally. A direct computation gives us the following:

$\Sp_{3,1}(R)-\alpha\beta \Id_V^{\otimes 2}=\frac{1}{\sqrt{2}}\begin{pmatrix}0&1+e^{i\theta}\\-i-ie^{-i\theta}&0\end{pmatrix}\oplus \frac{1}{\sqrt{2}}\begin{pmatrix}0&e^{i\theta}-e^{2i\theta}\\-ie^{-i\theta}+ie^{-2i\theta}&0\end{pmatrix}$

\noindent which shows off-diagonal action on the second tensor factor. $R^{-1}$ case is easily derived from the above by complex conjugating. Hence, $S=(R,\mu,\alpha,\beta)$ is an EgYB-operator.

Now let us prove the Conway-type relation. For any triple $(L_-,L_0,L_+)$ we may choose a braid $\xi \in B_n$ such that the link $L_0$ is isotopic to the closure of $\xi$, and links $L_{\pm}$ are isotopic to the closures of $\sigma^{\pm}_{1} \xi$, respectively. The following computation comes directly from the definition of $T_S$ and the fact $e^{-i\pi/4}R+e^{i\pi/4}R^{-1}=\Id_{V^{\otimes 3}}$.

\vspace{3mm}$T_S(L_+)+T_S(L_-)=T_S(\sigma_{1} \xi)+T_S(\sigma^{-1}_{1} \xi)\\= \alpha^{-w-1}\tr \left((R\otimes \Id_V^{\otimes n-2})\circ \rho^R_n(\xi) \right)+\alpha^{-w+1}\tr \left((R^{-1}\otimes \Id_V^{\otimes n-2})\circ \rho^R_n(\xi) \right)\\
= \alpha^{-w} \tr \left( (\alpha^{-1}R+\alpha R^{-1})\otimes \Id_V^{\otimes n-2}) \circ \rho^R_n(\xi) \right)\\
=\alpha^{-w} \tr \left( (\Id_V^{\otimes 3}\otimes \Id_V^{\otimes n-2}) \circ \rho^R_n(\xi) \right)= T_S(\xi)=T_S(L_0)$

\vspace{3mm}\noindent where $w=w(\xi)$.

The last statement is obtained from the formula (\ref{gn}) and $\tr(\mu)=\tr(\Id_V)=2$.

\end{proof}

It is well known that any link diagram can be transformed into a diagram of trivial link via finitely many crossing changes, that is, via replacing some positive crossings with negative crossings and vice versa. If we apply the Conway-type relation in Theorem \ref{thm-I} for each step of such process, we obtain that for any link $L$ the invariant $T_S(L)$ is a finite sum of $T_S(G_d)$'s with integer coefficients. More specifically, the values $T_S(L)$ are multiples of 4 because the values of $T_S(G_d)$ are so. Let us normalize invariant $T_S$ as follows:

$$P_S(L)=\frac{1}{4}T_S(L)$$

\noindent Then this invariant $P_S$ has the same Conway-type relation as given in the theorem above and $P_S(G_1)=1$. This is the case of invariant $P$ in Theorem \ref{unique} with $x=y=1$, and by uniqueness it is a special case of the invariant $P$ appeared in \cite{Tu}.

\vspace{3mm}\noindent \emph{\underline{(2,3,1) EgYB-operator of type II}}

\begin{thm}
Let $R$ be a (2,3,1) gYB-operator of type II. Let $\mu=\Id_{V}$, $\alpha=e^{i\pi/4}$, and $\beta=1$. Then $S=(R,\mu,\alpha,\beta)$ is an EgYB-operator such that $T_S(G_n)=2^{n+1}$.
\end{thm}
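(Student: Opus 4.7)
The plan is to follow the template of Theorem \ref{thm-I} step for step. Condition (i) of Definition \ref{def-EgYB} is immediate since $\mu = \Id_V$. For condition (ii), I first verify by inspection of the Type II matrix that it has exactly the same sparsity pattern as Type I: $R^{j_1,j_2,j_3}_{i_1,i_2,i_3} = 0$ whenever $i_1 \ne j_1$ or $i_3 \ne j_3$, so Remark \ref{rmk} applies verbatim. It then suffices to show that $\Sp_{3,1}(R^{\pm 1}) - \alpha^{\pm 1}\Id_V^{\otimes 2}$ acts off-diagonally on the last tensor factor.

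The heart of the argument is a direct computation of the four diagonal entries of $\Sp_{3,1}(R)$. For each pair $(i_1, i_2) \in \{1,2\}^2$, the relevant entry is $\sum_{\ell} R^{i_1,i_2,\ell}_{i_1,i_2,\ell}$, which extracts two diagonal entries from one of the $4 \times 4$ blocks of the Type II matrix. In every case this sum equals $\frac{1}{\sqrt{2}}(1+i) = e^{i\pi/4} = \alpha$, so $\Sp_{3,1}(R) - \alpha\Id_V^{\otimes 2}$ has zero diagonal; combined with the automatic vanishing in positions where $i_1 \ne j_1$ (inherited from the sparsity of $R$), this produces off-diagonality on the last tensor factor.

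For $R^{-1}$, since $R$ is unitary we have $R^{-1} = R^{*}$, so $R^{-1}$ retains the same block-diagonal sparsity and the analogous diagonal sums of $\Sp_{3,1}(R^{-1})$ give $\frac{1}{\sqrt{2}}(1 - i) = e^{-i\pi/4} = \alpha^{-1}$, either by the complex-conjugation shortcut used in the proof of Theorem \ref{thm-I} or by computing $R^{-1}$ explicitly. Finally, formula (\ref{gn}) with $k = 3$, $m = 1$, $\beta = 1$, and $\tr(\mu) = 2$ yields $T_S(G_n) = \beta^{-n}\tr(\mu)^{k + m(n-2)} = 2^{n+1}$.

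The main potential obstacle is checking that the differences between the Type II and Type I matrices do not break the clean cancellation in the diagonal partial-trace sums. However, the entries that change pass between pairs of diagonal positions whose sum $\frac{1}{\sqrt{2}}(1+i)$ is preserved, or lie strictly off the block diagonals. Hence the four relevant sums remain equal to $\alpha$, and the cancellation mechanism used in Theorem \ref{thm-I} transfers without modification.
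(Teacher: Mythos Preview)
Your proposal is correct and follows essentially the same approach as the paper's proof: the paper simply displays the explicit matrix $\Sp_{3,1}(R)-\alpha\Id_V^{\otimes 2}$ (which is block-diagonal with zeros on the diagonal, hence off-diagonal on the second tensor factor) and invokes formula~(\ref{gn}), whereas you verify the same facts by computing only the four diagonal partial-trace sums and appealing to the inherited sparsity from Remark~\ref{rmk}. The treatment of $R^{-1}$ via unitarity/complex conjugation and the evaluation of $T_S(G_n)$ are identical to the paper's.
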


\begin{proof}

The first statement is proved in the same way as Theorem \ref{thm-I} with the following:

$\Sp_{3,1}(R)-\alpha\Id_V^{\otimes 2}=\frac{1}{\sqrt{2}}\begin{pmatrix}0&1+e^{i\theta}\\-1+e^{-i\theta}&0\end{pmatrix}\oplus \frac{1}{\sqrt{2}}\begin{pmatrix}0&e^{i\theta}-e^{2i\theta}\\e^{-i\theta}+e^{-2i\theta}&0\end{pmatrix}$

\noindent The formula for trivial links is directly from the formula (\ref{gn}) and $\tr(\mu)=\tr(\Id_V)=2$.

\end{proof}

In this case, there is no Conway-type relation as the corresponding minimal polynomial is of degree 3. Instead we have a relation

\begin{equation}\label{+2} T_S(L_{+2})-T_S(L_{+})+T_S(L_{0})-T_S(L_{-})=0 \end{equation}

\noindent from $\alpha^{-2}R^2-\alpha^{-1} R+ \Id-\alpha R^{-1}=0$ where $L_{+2}$ denotes the link containing two positive crossings inside the disk we considered in Figure \ref{Conway}. A direct consequence of equation (\ref{+2}) is $T_S(H)=0$ where $H$ denotes the Hopf link. More generally, we consider any disjoint union link $L_1 \bigsqcup L_2$ as $L_0$, and construct $L_{+2}$ by braiding two parallel strands twice where one of the two strands should be taken from $L_1$ and the other from $L_2$. Then we still obtain $T_S(L_{+2})=0$. It does not come directly from the equation (\ref{+2}) though. One way to see this is the following: Any such a link $L_{+2}$ can be represented as the closure of a braid which has $\sigma^2_i$ for some $i$ and no more $\sigma^{\pm 1}_i$ before and after. The corresponding image under the representation $\rho^R$ will have $R^2_i$ and no more $R^{\pm 1}_i$ before and after. Now observe that $R^2$ acts off-diagonally on the second tensor factor while $R^{\pm 1}$ acts diagonally on the first and third tensor factors. As a result, trace of the corresponding image is zero.


$T_S(L)=4$ for a few simplest knots. And for a few simple links with two components, the values are either 0 or 8 depending on the linking numbers. It seems that the invariant $T_S$ depends only on the number of components and the linking numbers.

Eric C. Rowell pointed out to me that the braid group representation $\rho^{R}$ from type II can be seen to factor over the BMW-algebra with parameters $r=q=e^{\pi i/4}$ (see \cite{We}). This suggests a connection to a specialization of the Kauffman polynomial $F$.

\vspace{3mm}\noindent \emph{\underline{(2,3,1) EgYB-operator of type III}}

\begin{thm}
Let $R$ be a (2,3,1) gYB-operator of type III. Let $\mu=\Id_{V}$, $\alpha=1$, and $\beta=\sqrt{2}$. Then $S=(R,\mu,\alpha,\beta)$ is an EgYB-operator such that

$$T_S(L_{+})+T_S(L_{-})=\sqrt{2}\:\:T_S(L_0)$$

and $T_S(G_n)=(\sqrt{2})^{-n}2^{n+1}$.
\end{thm}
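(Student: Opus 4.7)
The plan is to follow the same four-step template used in the proofs of Theorem \ref{thm-I} and its Type II analogue. Condition (i) of Definition \ref{def-EgYB} is automatic since $\mu=\Id_V$, and formula (\ref{gn}) together with $\tr(\mu)=2$ and $\beta=\sqrt{2}$ immediately yields $T_S(G_n)=(\sqrt{2})^{-n}2^{n+1}$, so the real work reduces to verifying condition (ii) and the skein relation. Both flow from a single algebraic identity, which I expect to be essentially the only nontrivial step.

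That identity is
$$R+R^{-1}=\sqrt{2}\,\Id_{V^{\otimes 3}},$$
equivalently the minimal polynomial relation $R^2-\sqrt{2}R+\Id=0$. I would verify it block-by-block using Remark \ref{rmk}(1): each of the four $2\times 2$ blocks (indexed by the fixed pair $(i_1,i_3)=(j_1,j_3)$) has diagonal entries equal to $1/\sqrt{2}$ and off-diagonal entries of the form $a/\sqrt{2}$ and $-\bar a/\sqrt{2}$ with $|a|=1$, so the block has determinant $1$ and its adjugate produces an inverse block with the same diagonal but negated off-diagonals; summing gives $\sqrt{2}\,\Id_2$ on each block. Once the identity holds, applying $\Sp_{3,1}$ to both sides shows that $\Sp_{3,1}(R^{\pm 1})-\sqrt{2}\,\Id_V^{\otimes 2}$ are negatives of one another, and a direct computation via the same block decomposition shows that each diagonal entry of $\Sp_{3,1}(R)$ equals $\sqrt{2}$ (a sum of two $1/\sqrt{2}$ contributions over $l\in\{1,2\}$). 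Hence $\Sp_{3,1}(R)-\sqrt{2}\,\Id_V^{\otimes 2}$ acts purely off-diagonally on the second tensor factor, and the same holds for $\Sp_{3,1}(R^{-1})-\sqrt{2}\,\Id_V^{\otimes 2}$. Remark \ref{rmk}(3), applied to $\Id_V^{\otimes(n-1)}\otimes(\Sp_{3,1}(R^{\pm 1})-\sqrt{2}\,\Id_V^{\otimes 2})$, then certifies condition (ii).

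The Conway-type relation then follows by transcribing the computation in Theorem \ref{thm-I} with $\alpha=1$: choosing $\xi\in B_n$ whose closure is $L_0$ and whose conjugates $\sigma_1^{\pm 1}\xi$ close to $L_\pm$,
$$T_S(L_+)+T_S(L_-)=\beta^{-n}\tr\bigl((R+R^{-1})\otimes\Id_V^{\otimes(n-2)}\circ\rho^R_n(\xi)\bigr)=\sqrt{2}\,T_S(L_0).$$
The hard part is really just the minimal-polynomial identity $R+R^{-1}=\sqrt{2}\,\Id_{V^{\otimes 3}}$; granting that, all the remaining steps reduce to the formal manipulations already carried out in the proofs of Theorem \ref{thm-I} and Lemma \ref{sp}.
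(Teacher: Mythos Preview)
Your proposal is correct and follows essentially the same approach as the paper: verify condition (ii) by showing $\Sp_{3,1}(R^{\pm 1})-\sqrt{2}\,\Id_V^{\otimes 2}$ is off-diagonal on the second tensor factor and invoke Remark \ref{rmk}(3), derive the Conway relation from $R+R^{-1}=\sqrt{2}\,\Id_{V^{\otimes 3}}$, and read off $T_S(G_n)$ from formula (\ref{gn}). The only cosmetic difference is that the paper simply displays the explicit $4\times 4$ matrix $\Sp_{3,1}(R)-\sqrt{2}\,\Id$ (and handles $R^{-1}$ via unitarity/complex conjugation as in Theorem \ref{thm-I}), whereas you argue structurally that all diagonal entries of $R$ equal $1/\sqrt{2}$ and use the identity $R+R^{-1}=\sqrt{2}\,\Id$ to transfer the off-diagonality to the $R^{-1}$ case.
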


\begin{proof}

The first statement is proved in the same way as Theorem \ref{thm-I} with the following:

$\Sp_{3,1}(R)-\alpha\beta\Id_V^{\otimes 2}=\frac{1}{\sqrt{2}}\begin{pmatrix}0&1+e^{i\theta}\\-1-e^{-i\theta}&0\end{pmatrix}\oplus \frac{1}{\sqrt{2}}\begin{pmatrix}0&-e^{i\theta}-e^{2i\theta}\\e^{-i\theta}+e^{-2i\theta}&0\end{pmatrix}$

\noindent The Conway-type relation can be easily shown in the same way as Theorem \ref{thm-I} from a relation
$R+R^{-1}=\sqrt{2}\Id_{V^{\otimes 3}}.$ The last statement is a direct consequence of the formula (\ref{gn}) and $\tr(\mu)=\tr(\Id_V)=2$.

\end{proof}

By defining $P_S(L)=\frac{1}{2\sqrt{2}}T_S(L)$ with $x=1, y=\sqrt{2}$, we see that the invariant $P_S$ satisfies the conditions in Theorem \ref{unique} and thus this is a special case of the invariant $P$ appeared in \cite{Tu} as well.

\subsection{(2,3,2)EgYB-operator}\label{ex-2}

A vector space $V$ is 2-dimensional. In \cite{RZWG} it is shown that $R=\frac{1}{\sqrt{2}}\begin{pmatrix} I&J\\-J&I \end{pmatrix}$ is a unitary (2,3,2) gYB-operator where $I$ is the $4\times4$ identity matrix and $J=\begin{pmatrix} 0&0&0&1\\0&0&1&0\\0&1&0&0\\1&0&0&0 \end{pmatrix}$.

\begin{thm}
Let $R$ be the (2,3,2) gYB-operator as above. Let $\mu=\Id_{V}$, $\alpha=1$, and $\beta=2\sqrt{2}$. Then $S=(R,\mu,\alpha,\beta)$ is an EgYB-operator such that

$$T_S(L_{+})+T_S(L_{-})= \sqrt{2}\:\:T_S(L_0)$$

and $T_S(G_n)=(\sqrt{2})^{-n}2^{n-1}$.

\end{thm}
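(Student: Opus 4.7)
The plan is to verify the three assertions in the stated order: that the quadruple $S=(R,\mu,\alpha,\beta)$ satisfies Definition \ref{def-EgYB}, that $T_S$ obeys the Conway-type skein relation, and that $T_S(G_n)$ has the stated value. Since $\mu = \Id_V$, condition (i) of Definition \ref{def-EgYB} is automatic. For condition (ii), with $k=3$ and $m=2$, the operator $\Sp_{3,2}(R\circ\mu^{\otimes 3}) = \Sp_{3,2}(R)$ lives in $\End(V)$, and I would compute it directly from the block form $R=\frac{1}{\sqrt{2}}\left(\begin{smallmatrix} I & J\\ -J & I\end{smallmatrix}\right)$. The $(1,1)$ and $(2,2)$ blocks contribute $\frac{1}{\sqrt{2}}\tr(I_4)=2\sqrt{2}$ to the corresponding diagonal entries of $\Sp_{3,2}(R)$, while the $(1,2)$ and $(2,1)$ blocks contribute $\pm\frac{1}{\sqrt{2}}\tr(J)=0$ because $J$ is anti-diagonal. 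Hence $\Sp_{3,2}(R)=2\sqrt{2}\,\Id_V=\alpha\beta\,\Id_V$. Since $R$ is unitary with real entries and $J$ is symmetric, $R^{-1}=R^T=\frac{1}{\sqrt{2}}\left(\begin{smallmatrix} I & -J\\ J & I\end{smallmatrix}\right)$ has the same block structure, so the identical calculation gives $\Sp_{3,2}(R^{-1})=2\sqrt{2}\,\Id_V=\alpha^{-1}\beta\,\Id_V$. Both difference operators in Definition \ref{def-EgYB}(ii) therefore vanish identically, so they lie in $\im(\rho^R_n)^\bot$ trivially for every $n$.

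For the Conway-type relation, the key observation is that
\[
R+R^{-1}=\tfrac{1}{\sqrt{2}}\begin{pmatrix} 2I & 0\\ 0 & 2I\end{pmatrix}=\sqrt{2}\,I_{V^{\otimes 3}},
\]
which follows immediately from the block forms above. I would then mimic the calculation in Theorem \ref{thm-I}: pick a braid $\xi\in B_n$ so that $L_0$ is the closure of $\xi$ and $L_\pm$ are the closures of $\sigma_1^{\pm 1}\xi$, then expand
\[
T_S(L_+)+T_S(L_-)=\beta^{-(n+1)}\alpha^{-w(\xi)}\,\tr\!\left(\bigl[(R+R^{-1})\otimes I_2^{\otimes n-2}\bigr]\circ\rho^R_n(\xi)\right),
\]
using $\alpha=1$ so that the $\alpha^{\mp 1}$ factors become $1$. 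Substituting $R+R^{-1}=\sqrt{2}\,I_{V^{\otimes 3}}$ and comparing with $T_S(L_0)=\beta^{-n}\alpha^{-w(\xi)}\tr(\rho^R_n(\xi))$, the weights $\beta^{-(n+1)}$ and $\beta^{-n}$ combine with the scalar $\sqrt{2}$ to yield the desired identity $T_S(L_+)+T_S(L_-)=\sqrt{2}\,T_S(L_0)$.

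Finally, the value on the trivial link follows directly from formula (\ref{gn}): with $k=3$, $m=2$, $\tr(\mu)=\tr(\Id_V)=2$, and $\beta=2\sqrt{2}$,
\[
T_S(G_n)=\beta^{-n}\tr(\mu)^{k+m(n-2)}=(2\sqrt{2})^{-n}\cdot 2^{2n-1}=(\sqrt{2})^{-n}\,2^{n-1}.
\]
No step here presents a real obstacle; the mild subtlety is recognizing that, unlike the $(2,3,1)$ cases of Section \ref{ex-1} where condition (ii) required the off-diagonal/orthogonality argument of Remark \ref{rmk}, here the difference $\Sp_{3,2}(R^{\pm 1})-\alpha^{\pm 1}\beta\,\Id_V$ already vanishes on the nose, which is the point where the larger $m=2$ combined with the symmetric block shape of this particular $R$ pays off.
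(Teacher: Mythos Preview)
Your approach coincides with the paper's: condition (i) is trivial since $\mu=\Id_V$, condition (ii) holds because $\Sp_{3,2}(R^{\pm 1})=2\sqrt{2}\,\Id_V$ exactly (so the difference is $0$, not merely orthogonal), the skein relation comes from $R+R^{-1}=\sqrt{2}\,\Id_{V^{\otimes 3}}$, and formula~(\ref{gn}) gives $T_S(G_n)$.

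There is, however, a slip in your skein computation. The braids $\sigma_1^{\pm 1}\xi$ still lie in $B_n$, not $B_{n+1}$, so
\[
T_S(L_\pm)=\alpha^{-w(\xi)\mp 1}\beta^{-n}\,\tr\!\bigl((R^{\pm 1}\otimes I_2^{\otimes n-2})\circ\rho_n^R(\xi)\bigr),
\]
with exponent $\beta^{-n}$, not $\beta^{-(n+1)}$. With your stated exponents the two sides would be off by a factor of $\beta=2\sqrt{2}$ and you would obtain $T_S(L_+)+T_S(L_-)=\tfrac{1}{2}T_S(L_0)$ instead. Once the exponent is corrected the $\beta$ powers match on both sides and the relation follows exactly as in Theorem~\ref{thm-I}.
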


\begin{proof}

The commutativity condition (i) in Definition \ref{def-EgYB} is trivial because $\mu$ is the identity.
For the condition (ii) a direct computation shows that $\Sp_{k,m}(R^{\pm 1}\circ \mu^{\otimes k})-\alpha^{\pm 1}\beta\mu^{\otimes k-m}=0$. Indeed $\Sp_{3,2}(R^{\pm 1})= 2\sqrt{2}\Id_{V}$ and hence $S=(R,\mu,\alpha,\beta)$ is an EgYB-operator.

For the Conway-type relation, observe that $R+R^{-1}=\sqrt{2}\Id_{V^{\otimes 3}}$. The last statement is from the formula (\ref{gn}) and $\tr(\mu)=2$.

\end{proof}

 A normalized invariant $P_S=\sqrt{2} T_S$ satisfies the conditions in Theorem \ref{unique} with $x=1, y=\sqrt{2}$. So this is a special case of the invariant $P$ in \cite{Tu}. Indeed, the link invariants from $(2,3,1)$ EgYB-operator of type III and $(2,3,2)$ EgYB-operator are the same up to a factor 4. If we denote the former by $T_{S(III)}$ and the latter by $T_{S(2,3,2)}$ then $\frac{1}{4}T_{S(III)}(L)=T_{S(2,3,2)}(L)$ for any link $L$.

\section{Appendix: Proof of Theorem \ref{mul}}

In this appendix, we prove that the link invariant $T_S(L)$ is projectively multiplicative for any EgYB-operator $S$, and hence multiplicative after a normalization.

Let a link $L=L_1\bigsqcup L_2$ and $L, L_1, L_2$ be isotopic to the closure of $\xi=\xi_1 * \xi_2 \in B_{n}$, $\xi_1 \in B_{n_1}$, $\xi_2 \in B_{n_2}$, respectively, where $*$ denotes juxtaposition and $n=n_1+n_2$. Let $S=(R,\mu,\alpha,\beta)$ be a $(d,k,m)$ EgYB-operator. Choose a nonnegative integer $l$ such that $ml+2m-k \geq 0$ (if $2m-k\geq 0$, then one may choose $l=0$). We consider a link $L'=L\bigsqcup G_l$ which is obtained by inserting disjoint $l$ trivial circles into $L$. Then $L'$ is isotopic to the closures of both $\xi^{(1)}=\xi_1 * \mid_l *  \xi_2 \in B_{n+l}$ and $\xi^{(2)}=\xi_1  *  \xi_2 * \mid_l \in B_{n+l}$ where $\mid_l \in B_l$ is the identity. We obtain $T_S(L')$ from these two braids separately and then compare them. At first, we obtain the following from $\xi^{(1)}$

\vspace{3mm}
$\tr \left( \rho^R_{n+l}(\xi^{(1)}) \circ \mu^{\otimes k+m(n+l-2)} \right)
= \tr \left( \rho^R_{n+l}(\xi_1 * \mid_l *  \xi_2) \circ \mu^{\otimes k+m(n+l-2)} \right)\\
= \tr \left( \left[\rho^R_{n_1}(\xi_{1})\otimes \Id_V^{\otimes m(n_2+l)} \right]\circ  \left[ \Id_V^{\otimes m(n_1+l)}\otimes \rho^R_{n_2}(\xi_{2})\right] \circ  \mu^{\otimes k+m(n+l-2)} \right) \\
= \tr\left( \left[ \rho^R_{n_1}(\xi_{1}) \circ \mu^{\otimes k+m(n_1-2)} \right] \otimes \left[ \mu^{\otimes m(l+2)-k}  \right] \otimes \left[ \rho^R_{n_2}(\xi_{2}) \circ \mu^{\otimes k+m(n_2-2)} \right]    \right) \\
=\tr(\mu)^{m(l+2)-k} \cdot \tr\left(\rho^R_{n_1}(\xi_{1}) \circ \mu^{\otimes k+m(n_1-2)}\right)  \cdot \tr\left(\rho^R_{n_2}(\xi_{2}) \circ \mu^{\otimes k+m(n_2-2)}\right).$

\vspace{3mm}\noindent On the other hand, from $\xi^{(2)}$

$\tr \left( \rho^R_{n+l}(\xi^{(2)}) \circ \mu^{\otimes k+m(n+l-2)} \right)
= \tr \left( \rho^R_{n+l}(\xi_1  *  \xi_2 * \mid_l) \circ \mu^{\otimes k+m(n+l-2)} \right)\\
= \tr \left( \left[\rho^R_{n}(\xi)\otimes \Id_V^{\otimes ml} \right]\circ \mu^{\otimes k+m(n+l-2)}\right) \\
= \tr(\mu)^{ml} \cdot \tr \left( \rho^R_{n}(\xi) \circ \mu^{\otimes k+m(n-2)}\right )$

\vspace{3mm} \noindent Thus

$\tr \left( \rho^R_{n}(\xi) \circ \mu^{\otimes k+m(n-2)}\right )=\tr(\mu)^{2m-k} \cdot \tr\left(\rho^R_{n_1}(\xi_{1}) \circ \mu^{\otimes k+m(n_1-2)}\right)  \cdot \tr\left(\rho^R_{n_2}(\xi_{2}) \circ \mu^{\otimes k+m(n_2-2)}\right) $.

\noindent Because $w(\xi)=w(\xi_1)+w(\xi_2)$ and $n=n_1+n_2$, it implies

$T_S(L)=T_S(\xi)=\tr(\mu)^{2m-k} \cdot T_S(\xi_1) \cdot T_S(\xi_2) = \tr(\mu)^{2m-k} \cdot T_S(L_1) \cdot T_S(L_2).$



\end{document}